\documentclass{amsart}
\usepackage{amssymb,amsthm,amscd}

\newcommand{\rsp}{\raisebox{0em}[2ex][1.3ex]{\rule{0em}{2ex} }}

\newcommand{\Z}{{\mathbb Z}}

\newcommand{\Q}{{\mathbb Q}}

\newcommand{\fa}{\mathfrak a}
\newcommand{\fb}{\mathfrak b}
\newcommand{\fd}{\mathfrak d}
\newcommand{\ftw}{\mathfrak 2}
\newcommand{\fthr}{\mathfrak 3}
\newcommand{\cO}{{\mathcal O}}
\newcommand{\eps}{\varepsilon}
\newcommand{\Cl}{{\operatorname{Cl}}}
\newcommand{\Gal}{{\operatorname{Gal}}}

\newcommand{\lra}{\longrightarrow}

\newcommand{\la}{\langle}
\newcommand{\ra}{\rangle}

\newfont{\cyr}{wncyb10}
\newcommand{\TS}{\mbox{\cyr Sh}}

\newtheorem{thm}{Theorem}[section]
\newtheorem{prop}[thm]{Proposition}
\newtheorem{lem}[thm]{Lemma}

\numberwithin{equation}{section}

\title[Pure Cubic Number Fields]
       {Why is the Class Number of $\Q(\sqrt[3]{11})$ even?} 
\author{F. Lemmermeyer}
\email{hb3@ix.urz.uni-heidelberg.de}
\address{M\"orikeweg 1, 73489 Jagstzell, Germany}

\begin{document}

%\maketitle

\begin{abstract}
In this article we will describe a surprising observation that 
occurred in the construction of quadratic unramified extensions 
of a family of pure cubic number fields. Attempting to find an 
explanation will lead us on a magical mystery tour through the 
land of pure cubic number fields, Hilbert class fields, and 
elliptic curves.
\end{abstract}

\maketitle
\begin{center} \today  \end{center}

Euler was one of the (if not the) most prolific writers in mathematics.
Yet few if any of the articles appearing today are modeled after
Euler's way of writing; Euler often explained how he attacked a 
problem even if the attack ultimately proved unsuccessful: before
showing that an equation such as $x^3 + y^3 = z^3$ is not solvable in
integers he would try out one method of solving diophantine equations
after another. Gauss's motto ``pauca sed matura'' places him at the
other end of the spectrum: Gauss did not care very much about
conveying the motivation behind his proofs or about sketching the
paths that led him there, and Landau later wrote a whole series
of textbooks that consisted of little more than definitions, 
theorems and proofs.

Today, articles written in Euler's style have almost disappeared
from the literature for obvious economic (and other) reasons.
Here I would like to revive the Eulerian tradition and describe
in some detail the development from a curious observation on class
numbers to the results in Section \ref{S8}. What happened was that 
I numerically tested a result I wanted to use as an exercise for 
\cite{LS}; it turned out that the family of pure cubic number 
fields $\Q(\sqrt[3]{m}\,)$ for cubefree values of $m = 8b^3+3$ 
with $1 \le b < 89$ had even class numbers, but that the class 
number was odd for $b = 89$.

\section{An Exercise in Class Field Theory}

Consider pure cubic number fields $K = \Q(\sqrt[3]{m}\,)$ 
with $m = a^3+3$, and assume that $m$ is cubefree. The element 
$a - \omega$, where $\omega = \sqrt[3]{m}$, has norm 
$N(a - \omega) = a^3-m = -3$. Since $m \equiv 2, 3, 4 \bmod 9$,
the prime $3$ is ramified completely in $K/\Q$. Thus 
$(a-\omega)^3 = (3)$, and the element 
$\eps = -\frac13(a-\omega)^3 = 1 + a^2\omega - a\omega^2$ 
must be a unit in the ring of integers $\cO_K$.

If $a \equiv 0 \bmod 4$, this unit is positive and congruent to 
$1 \bmod 4$, hence $K(\sqrt{\eps}\,)/K$ is an unramified 
quadratic extension of $K$. 

\begin{prop}
Let $a > 0$ be an integer and assume that $m = a^3 + 3$ is cubefree. 
If $4 \mid a$ and $a \ne 0$, then the class number of 
$K =  \Q(\sqrt[3]{m}\,)$ is even.
\end{prop}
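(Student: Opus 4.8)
The plan is to exhibit $L = K(\sqrt{\eps}\,)$ as a nontrivial everywhere-unramified abelian extension of $K$ and then read off $2 \mid h_K$ from class field theory. Once we know that $L/K$ is a quadratic extension unramified at every finite and infinite place, $L$ is contained in the Hilbert class field $H$ of $K$; since $\Gal(H/K) \cong \Cl(K)$ and $[L:K] = 2$ divides $[H:K] = h_K$, the class number is even. The two things to establish are therefore that $L/K$ is unramified everywhere and that $L \ne K$.

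For the finite primes I would avoid case distinctions and exploit the congruence $\eps \equiv 1 \bmod 4$ directly. This congruence holds because $4 \mid a$ forces $a^2\omega \in 4\cO_K$ and $a\omega^2 \in 4\cO_K$, so that $\eps - 1 = a^2\omega - a\omega^2 \in 4\cO_K$. Then $\theta = \frac12(1 + \sqrt{\eps}\,)$ is a root of $x^2 - x + \frac{1-\eps}{4}$, a monic polynomial with coefficients in $\cO_K$ (indeed $\frac{1-\eps}{4} \in \cO_K$ precisely because $\eps \equiv 1 \bmod 4$) whose discriminant is the unit $\eps$. Hence the relative discriminant of $L = K(\theta)/K$ divides $(\eps) = \cO_K$ and is trivial, so $L/K$ is unramified at all finite primes at once. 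For the infinite places, $K$ has signature $(1,1)$: the complex place cannot ramify, and the real place stays real because $a > 0$ gives $\omega > a$, whence $\eps = -\frac13(a-\omega)^3 > 0$ under the real embedding. Thus $L/K$ is unramified everywhere.

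The genuine content — and the step I expect to be the main obstacle — is showing $L \ne K$, i.e. that $\eps$ is \emph{not} a square in $K$, for only then is $L/K$ truly quadratic. The hypothesis $a \ne 0$ is clearly necessary, since $a = 0$ gives $\eps = 1$; conversely $a \ne 0$ yields $\eps \ne 1$ because $\eps - 1 = a\omega(a - \omega) \ne 0$. To upgrade $\eps \ne 1$ to $\eps \notin K^{\times 2}$ I would pursue one of two routes. The first is unit-theoretic: $K$ has unit rank $1$, so $\cO_K^\times = \{\pm 1\} \times \langle \eta \rangle$, and since $N(\eps) = -\frac{1}{27}N(a-\omega)^3 = 1$ and $\eps > 0$ we may write $\eps = \eta^n$; one then argues $n$ is odd, for instance by bounding the regulator against the very small real value $\eps \approx \frac{1}{3a^6}$. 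The second, cleaner route is local at $2$: after determining the splitting of $2$ in $K$ (note $m = a^3 + 3 \equiv 3 \bmod 8$), one verifies that $\eps$ is a non-square in the completion $K_\fp$ for some $\fp \mid 2$, which simultaneously reconfirms that $L/K$ realizes the unramified quadratic extension there and forces $\eps \notin K^{\times 2}$ globally. Controlling this $2$-primary behaviour \emph{uniformly} in $a$ is the crux of the argument.

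With $\eps \notin K^{\times 2}$ in hand, $L = K(\sqrt{\eps}\,)$ is a genuine unramified quadratic extension of $K$, hence $L \subseteq H$, and therefore $2 \mid [H:K] = h_K$, which completes the proof.
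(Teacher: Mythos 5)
Your class-field-theoretic frame is correct and your verification that $L=K(\sqrt{\eps}\,)/K$ is unramified is actually \emph{more} detailed than the paper, which simply asserts that $\eps>0$ and $\eps\equiv 1\bmod 4$ do the job; the observation that $\theta=\frac12(1+\sqrt{\eps}\,)$ satisfies the monic polynomial $x^2-x+\frac{1-\eps}{4}$ with unit discriminant $\eps$ is a clean way to kill all finite primes at once. But there is a genuine gap at exactly the point you flag as ``the genuine content'': you never prove $\eps\notin K^{\times 2}$, and neither of your two sketched routes goes through as stated. The regulator route fails on size grounds: from $\omega-a\approx a^{-2}$ one gets $\eps^{-1}\approx 3a^6$, so a hypothetical square root of $\eps^{-1}$ would be a unit of size about $\sqrt{3}\,a^{3}$; the discriminant of $K$ has absolute value about $27a^6$, and the available lower bounds for the fundamental unit $\eta_0>1$ of a complex cubic field (Artin's $\eta_0^3>(|d|-27)/4$) give only $\eta_0\gg a^2$, far below $\sqrt{3}\,a^3$ --- so no contradiction arises from the regulator; you would need to know that $\eps^{-1}$ is fundamental (or occurs to odd exponent), which is a separate and nontrivial theorem. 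The local route at $2$ is worse: nothing forces $\eps$ to be a nonsquare in some completion above $2$. Here $2\cO_K=\fp_1\fp_2$ with $K_{\fp_1}=\Q_2$, and for $a\equiv 4\bmod 8$ one indeed finds $\eps\equiv 5\bmod 8$ in $\Q_2$, a local nonsquare; but for $a\equiv 0\bmod 8$ (e.g.\ $a=8$, $m=515$, where $\eps=1+64\omega-8\omega^2\equiv 9\bmod 16$) one gets $\eps\equiv 1\bmod 8$, a square in $\Q_2$, and if $\fp_2$ happens to split in $L$ --- perfectly consistent with everything you are trying to prove --- then $\eps$ is a local square at \emph{every} place above $2$. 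A globally nonsquare unit that is locally a square above $2$ is not excluded by any general principle, so this test can simply fail to certify anything; your ``crux'' is a real obstruction, not a routine verification.

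The paper closes this gap with a short elementary argument that is uniform in $a$, and you should adopt it (it is the content of the paper's Lemma). Since $\eps=\frac13(\omega-a)^3$, if $\eps$ were a square in $K$ then so would be $3(\omega-a)$: indeed $\eps=\gamma^2$ gives $3(\omega-a)=\bigl(3\gamma/(\omega-a)\bigr)^2$. Writing $\beta=r+s\omega+t\omega^2$ with $\beta^2=3\omega-3a$ and comparing coefficients yields
\[ r^2+2mst=-3a,\qquad 2rs+mt^2=3,\qquad s^2+2rt=0. \]
Since $a>0$ and $m>3$, these force $st<0$, $rs<0$ and $rt<0$, which is impossible because the product of these three quantities is $(rst)^2\ge 0$. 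This sidesteps both the unit-group and the $2$-adic difficulties entirely; with it, your unramifiedness argument and the class-field-theory conclusion $2\mid h_K$ are complete.
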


It only remains to show that the unit $\eps$ is not a square:

\begin{lem}
Let $a > 0$ be an integer and assume that $m = a^3 + 3$ is cubefree. 
Then $\eps =  1 + a^2\omega - a\omega^2$ is not a square in 
$K =  \Q(\sqrt[3]{m}\,)$. 
\end{lem}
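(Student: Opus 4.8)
The plan is to assume $\eps=\gamma^2$ with $\gamma\in K$ and reduce this to a statement over $\Q$; observe first that the sign of $\eps$ is no help, since $\eps=-\tfrac13(a-\omega)^3$ is positive at the real place for every $a>0$. As $a\ge 1$ forces $\eps\notin\Q$, the element $\eps$ generates $K$ and has a cubic characteristic polynomial. From $N(\eps)=1$, $\operatorname{Tr}(\eps)=3$ and $\operatorname{Tr}(\eps^2)=3-6a^3m$ (using $\operatorname{Tr}(\omega)=\operatorname{Tr}(\omega^2)=0$) I would compute
\[
\chi(t)=t^3-3t^2+3(1+a^3m)\,t-1 .
\]
If $\eps=\gamma^2$, then $\gamma$ also generates $K$ and is a root of the sextic $\chi(t^2)$, which must therefore split over $\Q$ into two cubics.

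Being even, $\chi(t^2)$ can only factor as $-h(t)h(-t)$ with $h(t)=t^3+\alpha t^2+\beta t+\gamma_0$. Matching coefficients yields $\beta=\tfrac12(\alpha^2-3)$, $\gamma_0=\pm1$, and one surviving relation which, after clearing denominators, collapses to the single equation
\[
(\alpha+1)^3(\alpha-3)=12a^3m=12a^6+36a^3 ,\qquad \alpha\in\Z
\]
(the choice $\gamma_0=-1$ gives the mirror equation $(\alpha-1)^3(\alpha+3)=12a^6+36a^3$, disposed of by $\alpha\mapsto-\alpha$). It therefore suffices to prove that this equation has no solution $\alpha\in\Z$ for any $a\ge 1$ with $a^3+3$ cubefree.

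I would first exploit $2$-adic valuations. The right-hand side is even, so $\alpha$ must be odd; setting $\alpha+1=2p$ rewrites the equation as $4p^3(p-2)=3a^3(a^3+3)$. Since $p$ and $p-2$ differ by $2$, a short case check gives $v_2\big(4p^3(p-2)\big)\in\{2\}\cup\{n:n\ge 7\}$, whereas $v_2\big(3a^3(a^3+3)\big)=3\,v_2(a)+v_2(a^3+3)$. At this point the cubefree hypothesis is essential: it forces $v_2(a^3+3)\le 2$, and comparing the two valuations then rules out every residue class except $a\equiv 0,1\pmod 8$.

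The two surviving classes are exactly the ones that pass this $2$-adic test, so no congruence can finish the argument and the remaining content is genuinely global --- this is the real obstacle. Here I would switch to the equivalent question of whether $3(\omega-a)$ is a square, which holds precisely when $\eps$ is, because $9\,\eps=3(\omega-a)\cdot(\omega-a)^2$ and both $9$ and $(\omega-a)^2$ are squares. Its analogue of $\chi(t^2)$, read as a quadratic in $a$, is solvable only when the discriminant $3\alpha(\alpha^3-18)$ is a perfect square, i.e. only at a rational point of the elliptic curve $Y^2=3\alpha^4-54\alpha$ (and its companion $Y^2=3\alpha^4+54\alpha$ for the other sign). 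Proving that no such point returns a positive integer $a$ with $a^3+3$ cubefree is the crux, and it is presumably the doorway to the elliptic curves promised in the abstract; I would finish by determining the Mordell--Weil groups of these two curves and inspecting the finitely many points that could possibly yield an admissible $a$.
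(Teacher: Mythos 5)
Your algebra is correct as far as it goes: the characteristic polynomial $\chi(t)=t^3-3t^2+3(1+a^3m)t-1$ is right, the factorization $\chi(t^2)=-h(t)h(-t)$ with $h$ monic in $\Z[t]$ is forced by Gauss's lemma, the coefficient match does collapse to $(\alpha+1)^3(\alpha-3)=12a^3m$, and your $2$-adic sieve leaving $a\equiv 0,1\bmod 8$ checks out. But what you have written is not a proof. The entire global content is deferred to ``determining the Mordell--Weil groups'' of $Y^2=3\alpha^4\mp 54\alpha$, which is never done; and even granting those computations, positive rank would leave infinitely many rational points, so you would in fact need the integrality $\alpha\in\Z$ together with an effective determination of \emph{integral} points on these genus-one quartics --- none of which is supplied or even sketched.

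The gap is worse than an omission: the step you defer is unprovable as you state it, because your equation \emph{has} a solution with $a\ge 1$ and $m$ cubefree, namely $\alpha=-3$, $a=1$: indeed $(\alpha+1)^3(\alpha-3)=(-2)^3(-6)=48=12\cdot 1^3\cdot 4$, and $m=4$ is cubefree. This is not an artifact of the reduction; it reflects the identity $\eps=1+\omega-\omega^2=\bigl(\sqrt[3]{2}-1\bigr)^2$ in $K=\Q(\sqrt[3]{4}\,)=\Q(\sqrt[3]{2}\,)$, so the lemma as printed actually fails at $a=1$ (harmlessly for the paper, which only invokes it with $4\mid a$; your companion curve also returns $a=5$, where $m=128$ is not cubefree). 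Compare this with the paper's own proof, which is a five-line sign argument: from $9\eps=3(\omega-a)\cdot(\omega-a)^2$ --- the very pivot you reach and then abandon --- it suffices to show $3\omega-3a$ is not a square; writing $\beta=r+s\omega+t\omega^2$ and matching coefficients of $\beta^2=3\omega-3a$ gives $r^2+2stm=-3a$, $2rs+mt^2=3$, $s^2+2rt=0$, whose signs ($st<0$, $rt<0$, hence $rs>0$) are incompatible with the middle equation once $|t|\ge 1$. Note that this argument tacitly takes $r,s,t\in\Z$, legitimate exactly when $\cO_K=\Z[\omega]$, i.e.\ when $m$ is squarefree (recall $m\equiv 2,3,4\bmod 9$); for $m=hk^2$ the integral basis $\{1,\omega,\omega^2/k\}$ introduces a denominator, and the same sign bookkeeping then leaves precisely the case $h=1$, $k=2$, i.e.\ the exception $m=4$, $a=1$ that your equation detected. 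So: your route is a correct but vastly heavier reduction whose endgame is missing and, as formulated, false; the repair is either to restrict to $a\ge 2$ (or $m$ squarefree) and finish by the paper's elementary coefficient-and-sign comparison, not by Mordell--Weil computations.
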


\begin{proof}
From $\eps = \frac13(\omega-a)^3$ we see that if $\eps$ is a square
in $\cO_K$, then $3\omega - 3a = \beta^2$ is a square in $K$.
With $\beta = r + s\omega + t\omega^2$ we find the equations
$$ r^2 + 2st = -3a, \quad 2rs + mt^2 = 3 \quad \text{and} \quad 
   s^2 + 2rt = 0. $$
Since $m > 3$, these equations imply $st < 0$, $rs < 0$ and $rt < 0$:
but this is clearly impossible.
\end{proof}

If $a \equiv 2 \bmod 4$, let us write $a= 2b$ and $m = 8b^3 + 3$; 
the unit $\eps = 1 + 4b^2\omega - 2b\omega^2$ is not congruent to 
a square modulo $4$, but computing the class numbers of a few 
fields $K_b = \Q(\sqrt[3]{m}\,)$ produces the following results:

$$ \begin{array}{c|ccccccc}
 \rsp   b       &  1 &   3 &    5 &    7 &    9  &   11  &  13  \\  \hline    
 \rsp   m       & 11 & 3 \cdot 73 & 17 \cdot 59 & 41 \cdot 67 
                       & 3 \cdot 5 \cdot 389  &  10651 & 17579  \\
 \rsp h(K_b)    & 2  &  18 &   54 &  168 &  240  &  564  & 920
   \end{array} $$
Although these class numbers are all even, searching for a family of
explicit generators of unramified quadratic extensions of these 
cubic fields was unsuccessful.

Continuing this table shows that $h(K_b)$ is odd for 
$b = 19$; but here $m = 54875 = 5^3 \cdot 439$ is not cubefree.
The calculations have to be extended considerably before something
surprising happens:

$$ \begin{array}{c|ccccc}
 \rsp   b       &        85       &        86      &      87   
                     &       88      &   89  \\   \hline 
 \rsp   m       &  619 \cdot 7937 & 23^2 \cdot 9619 & 3 \cdot 1756009 
                     &  5451779      & 5 \cdot 11 \cdot 41^2 \cdot 61 \\
 \rsp h(K_b)    &   153954  & 6000  & 151200  &  186860 &  3375 
   \end{array} $$

Thus the class number of $K_b$ for $b = 89$ is odd, although it
is even for all 87 values less than $89$ for which $m = 8b^3+3$
is squarefree. This clearly cannot be an accident;
but how can we explain this phenomenon?

\section{Elliptic Curves}
Let $m = 8b^3 + 3$ for integers $b \ge 1$, assume that $m$
is cubefree, and let $K_b = \Q(\omega)$ denote the pure cubic 
number field defined by $\omega = \sqrt[3]{m}$. 

We now consider the family of elliptic curves $E_b: y^2 = x^3 - m$. 
A rational point on a curve $E_b$  has the form\footnote{For standard
results on the arithmetic of elliptic curves we refer to \cite{ST}
for a first introduction.} 
$x = \frac{r}{t^2}$ and $y = \frac{s}{t^3}$, and clearing denominators 
shows that such rational points correspond to solutions of the equation 
$s^2 = r^3 - m t^6$. In other words: the norm of the element 
$r - t^2 \omega \in K_b$ is a square. If this element is coprime 
to its conjugates, then there must be an ideal $\fa$ with 
$(r - t^2 \omega) = \fa^2$, and if $\fa$ is not principal, 
then the class number of $K_a$ will be even.

Computing a couple of rational points on these curves $E_b$ produces
the following table of selected points (here we have included even values
of $b$):

$$ \begin{array}{c|ccccc} 
    b &    1  &    2           &      3         &     4    & 5 \\   \hline
    P & (3,4) &  (17/4, 25/8)  &  (55/9, 82/27) &   (129/16, 193/64) 
               & (251/25, 376/125)
   \end{array} $$

These points all have the form $(f(b)/b^2, g(b)/b^3)$ for some 
(yet) unknown functions $f$ and $g$. Computing the differences 
we find
$$ \begin{array}{ccccccccccc}
      3  &    &  17 &    &  55 &    & 129 &     &  251 &     & 433 \\
         & 14 &     & 38 &     & 74 &     & 122 &      & 182 & \\
         &    &  24 &    &  36 &    & 48  &     &   60 &     & \\
         &    &     & 12 &     & 12 &     &  12 &      &     &
    \end{array} $$
The fact that the third differences seem to be constant and
equal to $12 = 2 \cdot 3!$ suggests that $f(b) = 2b^3 + $
terms of lower order, and then it is easy to guess that 
\begin{equation}\label{EPb}
 P_b\Big( \frac{2b^3+1}{b^2}, \frac{3b^3+1}{b^3} \Big) 
\end{equation}
is a family of rational points on the elliptic curves $E_b$. 
Since torsion points must be integral, we find

\begin{lem}
Let $a > 0$ be an integer and assume that $m = a^3 + 3$ is cubefree. 
The elliptic curves $E_b: y^2 = x^3 - m$ have rank at least $1$, and 
the rational points in (\ref{EPb}) have infinite order.
\end{lem}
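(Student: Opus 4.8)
The plan is to combine a one-line verification that $P_b$ lies on $E_b$ with the Nagell--Lutz theorem (see \cite{ST}), which guarantees that every torsion point of $E_b(\Q)$ has integral coordinates; the rank statement is then immediate from the Mordell--Weil structure $E_b(\Q) \cong \Z^r \oplus E_b(\Q)_{\mathrm{tors}}$, since a single point of infinite order forces $r \ge 1$.

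First I would substitute the coordinates of $P_b$ into $y^2 = x^3 - m$ with $m = 8b^3+3$. Clearing the common denominator $b^6$, the curve equation is equivalent to the polynomial identity
$$ (2b^3+1)^3 - (8b^3+3)\,b^6 = (3b^3+1)^2, $$
and expanding the left-hand side to $9b^6 + 6b^3 + 1$ confirms it; hence $P_b \in E_b(\Q)$ for every $b$.

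Next, to prove infinite order it suffices, by Nagell--Lutz, to exhibit a non-integral point in the subgroup generated by $P_b$. The $x$-coordinate is $\frac{2b^3+1}{b^2}$, and since $2b^3+1 \equiv 1 \bmod b$ we have $\gcd(2b^3+1,b)=1$, hence $\gcd(2b^3+1,b^2)=1$. For $b \ge 2$ the denominator $b^2$ therefore exceeds $1$ in lowest terms, so $P_b$ itself is non-integral; it cannot be a torsion point, and thus $P_b$ has infinite order and $E_b$ has rank at least $1$.

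The one genuine obstacle is the case $b = 1$, where $P_1 = (3,4)$ is integral and the integrality criterion alone does not decide the matter. I would close it by passing to the double: applying the duplication formula on $y^2 = x^3 - 11$ gives the tangent slope $\lambda = \frac{3\cdot 9}{2\cdot 4} = \frac{27}{8}$ and hence $x(2P_1) = \lambda^2 - 2\cdot 3 = \frac{345}{64}$, which is non-integral. Since a torsion point would have all of its multiples integral, $P_1$ is non-torsion as well. (Alternatively one may note that the Nagell--Lutz divisibility forces $y \mid (4a^3+27b^2) = 27m^2 = 3267$, an odd number, which the even value $y=4$ violates.) This handles the remaining case, and the lemma follows for all $b \ge 1$.
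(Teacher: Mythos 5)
Your proof is correct and follows essentially the same route as the paper, which justifies the lemma with the single remark ``since torsion points must be integral'': you verify the identity $(2b^3+1)^3 - (8b^3+3)b^6 = (3b^3+1)^2$ and invoke Nagell--Lutz integrality of torsion together with $\gcd(2b^3+1,b^2)=1$ to conclude that the denominator $b^2>1$ rules out torsion for $b \ge 2$. You are in fact more careful than the paper, whose one-line argument silently misses the case $b=1$ where $P_1=(3,4)$ \emph{is} integral; your patch via the non-integral double $2P_1 = \bigl(\tfrac{345}{64}, -\tfrac{6179}{512}\bigr)$ (or, alternatively, the Nagell--Lutz divisibility $y \mid 27m^2 = 3267$, which the even value $y=4$ violates) closes exactly that gap.
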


Thus all curves $E_b$ have rank at least $1$. Does this explain
the fact that the class numbers of $K_b$ tend to be even? Before
we return to this question, let us describe an approach that could
have predicted the family of rational points $P_b$.

\section{Quadratic Fields}

The points $P_b = (x,y)$ from (\ref{EPb}) satisfy $y^2 = x^3-m$;
clearing denominators then gives
\begin{equation}\label{Eb}
 (3b^3+1)^2 + b^6 m = (2b^3+1)^3. 
\end{equation}
Since the elements $\tau = 3b^3+1 + b^3 \sqrt{-m}$ and 
$\tau' =  3b^3+1 - b^3 \sqrt{-m}$ generate coprime ideals 
in $k_b = \Q(\sqrt{-m}\,)$, there must be an integral ideal $\fa$ 
in $k_b$ with $(\tau) = \fa^3$. This suggests that the class numbers 
of the quadratic number fields $k_b$ should have a tendency to be 
divisible by $3$. Numerical experiments, however, reveal that this 
is not correct. In fact, the element $\tau$ with norm $(2b^3+1)^3$ 
is a cube since 
\begin{equation}\label{Ebc}
 3b^2+1 + b^3 \sqrt{-m} = \Big(\frac{1+\sqrt{-m}}{2}\Big)^3. 
\end{equation}

Thus we have seen

\begin{lem}
The element $\tau = 3b^3+1 + b^3 \sqrt{-m}$ in $\Q(\sqrt{-m}\,)$
is a cube. In particular, the ideal $\fa$ with $(\tau) = \fa^3$
is principal.
\end{lem}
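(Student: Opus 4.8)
The plan is to exhibit an explicit cube root of $\tau$ inside the ring of integers $\cO_{k_b}$ of $k_b = \Q(\sqrt{-m}\,)$, from which the statement about $\fa$ follows at once. The one structural fact I would record first is an integrality observation: since $m = 8b^3+3 \equiv 3 \bmod 4$, we have $-m \equiv 1 \bmod 4$, so $\cO_{k_b} = \Z\big[\frac{1+\sqrt{-m}}{2}\big]$, and in particular every $\eta = \frac{p+q\sqrt{-m}}{2}$ with $p \equiv q \bmod 2$ is an algebraic integer. This is what opens the door to a half-integer cube root.

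To pin down a candidate rather than guess blindly, I would exploit the norm. By (\ref{Eb}) we have $N(\tau) = (3b^3+1)^2 + b^6 m = (2b^3+1)^3$, so any $\eta$ with $\eta^3 = \tau$ must satisfy $N(\eta) = 2b^3+1$. Writing $\eta = \frac{p+q\sqrt{-m}}{2}$ turns this into $p^2 + m q^2 = 4(2b^3+1) = m+1$, where the last equality uses $m = 8b^3+3$. Since $m \ge 11$, the estimate $mq^2 \le m+1 < 4m$ forces $|q| \le 1$, and $q = 0$ is impossible because $\tau \notin \Q$; hence $q = \pm 1$ and $p^2 = 1$, leaving only the four candidates $\eta = \frac{\pm 1 \pm \sqrt{-m}}{2}$. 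A direct expansion of $\eta^3$, separating the rational and $\sqrt{-m}$ parts and substituting $m = 8b^3+3$, then singles out $\eta = \frac{-1-\sqrt{-m}}{2} = -\frac{1+\sqrt{-m}}{2}$, giving $\eta^3 = 3b^3+1 + b^3\sqrt{-m} = \tau$; equivalently $\tau = -\big(\frac{1+\sqrt{-m}}{2}\big)^3$, which is the content of the identity displayed in (\ref{Ebc}). This expansion is elementary and is the only place where genuine computation occurs.

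Once $\tau = \eta^3$ is established, the ideal claim is immediate: $(\tau) = (\eta)^3$, so by unique factorization of ideals in $\cO_{k_b}$ the ideal $\fa$ with $(\tau) = \fa^3$ must equal $(\eta)$, and is therefore principal. I do not expect a real obstacle here. The only subtleties worth flagging are conceptual rather than technical: the existence of a half-integer cube root hinges entirely on $m \equiv 3 \bmod 4$, and it is the norm equation $p^2 + m q^2 = m+1$ that makes the cube root essentially \emph{forced} — explaining why the divisibility of $h(k_b)$ by $3$ predicted by the factorization $(\tau) = \fa^3$ fails to materialize.
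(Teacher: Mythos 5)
Your proof is correct and, at its core, coincides with the paper's: both rest on exhibiting the explicit half-integral cube root of $\tau$, which the paper simply records as the identity (\ref{Ebc}) with no derivation. What you add is the search procedure — the integrality observation $\cO_{k_b} = \Z\big[\frac{1+\sqrt{-m}}{2}\big]$ (valid since $-m \equiv 1 \bmod 4$) together with the norm equation $p^2 + mq^2 = 4(2b^3+1) = m+1$, which forces $|q| = 1$ and $|p| = 1$ and so reduces the problem to checking four candidates. Logically only the final expansion $\eta^3 = \tau$ is needed, but your derivation explains \emph{why} a cube root had to exist in this tiny list, which the paper leaves implicit. One point where your computation actually improves on the text: your sign is the correct one. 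Expanding gives
$$ \Big(\frac{1+\sqrt{-m}}{2}\Big)^3 = \frac{1-3m}{8} + \frac{3-m}{8}\sqrt{-m} = -(3b^3+1) - b^3\sqrt{-m} = -\tau, $$
so the paper's display (\ref{Ebc}) is off by a sign (and carries the typo $3b^2+1$ in place of $3b^3+1$), whereas your version $\tau = \big(\frac{-1-\sqrt{-m}}{2}\big)^3$ is what the identity should say. This discrepancy is harmless for the lemma — $-1$ is itself a cube, so $\tau$ is a cube either way and $(\tau) = (\eta)^3$ gives $\fa = (\eta)$ principal by unique factorization of ideals — but it is worth flagging.
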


This also means that the rational points $P_b$ on the 
family of elliptic curves $E_b$ could have been constructed 
in a rather trivial way: for $m = 8b^3+3$, taking the cube 
of the element\footnote{To be honest, this only worked because
the coefficient of $\sqrt{-m}$ is a cube; taking the third power 
of $\frac12(3+\sqrt{-m}\,)$, for example, does not work since 
its cube is $-9b^3 + (3-b^3) \sqrt{-m}$.} $\frac{1+\sqrt{-m}}2$ 
immediately gives (\ref{Ebc}) and thus the points $P_b$.

The ideal classes $[\fa]$ of order dividing $3$ in the quadratic 
number fields $k_b$ are all trivial. This begs the question 
whether the ideal classes of order dividing $2$ in the number 
fields $K_b$ deduced from (\ref{Eb}) are also trivial. This is
what we will look into next.

\section{Pure Cubic Fields}

Let us now write the equation (\ref{Eb}) in the form
$$ (3b^3+1)^2 = (2b^3+1)^3 - b^6 m  = N(2b^3+1 - b^2 \omega). $$
The element $\alpha = 2b^3+1 - b^2\omega$ has square norm; if it is 
the square of a principal ideal, it will not explain our
observations on class numbers. Playing around with elements of
small norm leads us to the observation\footnote{Observe that
$(4b^2+1)^3 = 64b^6 + 48b^4 + 12b^2 +1$ and $m^2 = 64b^6 + 48 b^3 + 9$;
in order to make the second terms vanish we only have to adjust the
coefficients slightly.} that 
$$ (4b^3+1)^3 - b^3 \cdot m^2 = 3b^3+1, $$
which shows that 
$$ N(\beta) =  3b^3+1 \quad \text{for} \quad \beta = 4b^3+1 - b \omega^2. $$
Thus there exist elements of norm $3b^3+1$; but is $\beta^2 = \alpha$?
The answer is no because
$$ \beta^2 = 16b^6 + 8b^3 + 1 + b^2(8b^3+3)\omega - (8b^4+2b)\omega^2. $$
A simple calculation, however, shows that 
$$ \eps \alpha = \beta^2, $$
where $\eps = 1 + 4b^2\omega - 2b\omega$ is the unit in $K_b$ we 
have started with. Thus $(\alpha)$ is the square of the principal 
ideal $(\beta)$, and so the ideal classes coming from the rational 
points $P_b$ are all trivial. It seems that we are back to square one.

\section{Back to Elliptic Curves}

The family of points $P_b$ on the elliptic curves $E_b$ shows that
these curves all have rank $\ge 1$. In fact, the curves $E_b$ for
small values of $b$ all have rank $\ge 2$: in fact, the Mordell-Weil
rank is $2$ for $1 \le b \le 90$ except for
\begin{itemize}
\item the values $b = 9, 17, 18, 20, 25, 53, 54, 67, 82, 87$ for which the 
      rank is $4$;
\item the value $b = 13$ for which the computation of the
      rank is complicated by the likely presence of a nontrivial
      Tate-Shafarevich group; here the rank is bounded
      by $2 \le r \le 4$, and the $2$-Selmer rank is even.
\item the values $b = 77$ and $a = 80$, for which the Selmer rank is $2$,
      but the second generator has large height. 
\item the values $b = 44$ and $a = 89$, for which the rank is $1$.
\item the values $b = 56, 68, 69, 86$, for which the rank is $3$.  
\end{itemize}

These results show that trying to construct two independent families of 
points on $E_b$ is bound to fail since there are examples of curves with
rank $1$. We therefore should show that the ranks of the curves
$E_b$ have a tendency to be even. This can be accomplished with the
help of the parity conjecture.

\subsection*{Parity Conjectures}
For formulating the various statements connected with the name 
partiy conjectures, let $r$ denote the Mordell-Weil rank of
an elliptic curve, and $R$ the analytic rank, that is, the order
of vanishing of the L-series of $E$ at $s = 1$. The conjecture
of Birch and Swinnerton-Dyer predicts that $r = R$.

The functional equation of the L-series of $E$ connects the values
at $s$ and $2-s$; the completed L-series $L^*$ satsifes the functional
equation 
$$ L^*_E(2-s) = w(E) L^*(s), $$
where $w(E) \in \{\pm 1\}$ is called Artin's root number. If 
$w(E) = -1$, then setting $s = 1$ in the functional equation 
implies $L(1) = 0$, and the Birch and Swinnerton-Dyer Conjecture 
predicts that $E$ has rank $\ge 1$. More generally, the parity 
conjecture states that $ (-1)^r = w(E)$. Performing a $p$-descent
on an elliptic curve provides us with the $p$-Selmer rank $r_p$ 
of $E$, and this rank differs from the rank $r$ of $E$ by an
even number if the Tate-Shafarevich group of $E$ is finite.

\begin{thm}
If $\TS(E)$ is finite, then the parity conjecture $(-1)^r = w(E)$ holds.
\end{thm}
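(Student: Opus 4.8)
The plan is to reduce the full parity statement to its Selmer-group analogue, which holds \emph{unconditionally}, and to use the finiteness of $\TS(E)$ only to bridge the gap between the Selmer rank and the Mordell--Weil rank. As recalled just above, a $p$-descent produces the $p$-Selmer rank $r_p$, and $r_p \equiv r \pmod 2$ whenever $\TS(E)$ is finite: the discrepancy between $r_p$ and $r$ is governed by the $p$-part of $\TS(E)$, which, being finite by hypothesis, carries a nondegenerate alternating Cassels--Tate pairing and so contributes an even amount to $\dim_{\F_p}$. Consequently $(-1)^r = (-1)^{r_p}$, and it suffices to prove the \emph{$p$-parity} identity
$$ (-1)^{r_p} = w(E) $$
for some (equivalently, any) prime $p$. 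I would take $p = 2$, since the curves at hand already come equipped with explicit $2$-descent data, though the choice of $p$ is immaterial.

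The heart of the matter is therefore this $p$-parity identity, which I would not attempt to reprove from scratch: it is a theorem of T. and V. Dokchitser, building on work of Nek\'ov\'a\v{r}, Kim and others, valid for every elliptic curve over $\Q$ and every prime $p$, with \emph{no} hypothesis on $\TS(E)$. Its proof proceeds by writing both sides as products of purely local terms and comparing them place by place. On the analytic side one uses the factorization $w(E) = \prod_v w_v(E)$ of Artin's root number into local root numbers. On the algebraic side one controls the parity of $r_p$ by means of \emph{regulator constants} attached to the action of a suitable finite group on a family of curves linked to $E$ by Brauer relations; the crucial point is that the regulator constant is insensitive to the global Tate--Shafarevich contribution, which is exactly what makes the resulting parity statement unconditional. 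Matching the local regulator-constant contributions against the local root numbers then yields $(-1)^{r_p} = w(E)$.

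Combining the two ingredients gives $(-1)^r = (-1)^{r_p} = w(E)$, as asserted. The genuine obstacle is the $p$-parity theorem itself: the local root-number computations are delicate at the primes of additive reduction and at the places $p$ and $\infty$, and the passage from Brauer relations to a clean global parity statement is precisely the deep input one must import. By contrast, the reduction using finiteness of $\TS(E)$ and the final assembly are entirely formal.
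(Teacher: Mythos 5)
Your proposal is correct and is essentially the route the paper itself takes: the paper states this theorem without proof, as a citation of known deep results, immediately after recording exactly your bridging step — that a $p$-descent yields a Selmer rank $r_p$ with $r_p \equiv r \bmod 2$ once $\TS(E)$ is finite (justified, as you say, by the nondegenerate alternating Cassels--Tate pairing on the finite group $\TS(E)$) — so that the whole content is the unconditional $p$-parity theorem over $\Q$ (Monsky for $p=2$; Nekov\'a\v{r}, Kim, and T.~and V.~Dokchitser in general) that you import. Your division of labor (formal reduction via finiteness of $\TS(E)$, deep input confined to the $p$-parity identity) matches the paper's implicit argument, and nothing further is needed.
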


The root number for elliptic curves with $j$-invariant $0$
was computed by Birch and Stephens; for our curves $E_b$
we find that (see Liverance \cite{Liv}) 
$$ w(E) = \prod_{p^2 \mid m} \Big(\frac{-3}p\Big). $$

Thus we expect that the rank $r_b$ of $E_b$ is even whenever 
$m$ is squarefree, and that it is odd if $r_b$ is divisible 
by the square of a unique prime $p \equiv 2 \bmod 3$. The 
values $b < 200$ for which this happens are 
$$ b = 44, 56, 68, 69, 86, 89, 94, 119, 169, 177, 194, $$
which agrees perfectly with our computations above.
Observe that the class number for $b = 419$ is even, and
that $m =  5^2 \cdot 11^2 \cdot 227 \cdot 857$ is divisible
by the square of two primes $p \equiv 2 \bmod 3$.

The only examples of pure cubic fields $K_b$ with odd class
numbers for $b < 1630$ and cubefree $m$ are 
\begin{align*}
   b & = 89, 119, 169, 177, 209, 369, 369, 503, 615, 661, 
         719, 787, \\
     & \quad  903, 1069, 1145, 1219, 1319, 1365, 1387, 1419, 1629. 
\end{align*} 
For all these $b$, the number $m = 8b^3+3$ is divisible by
exactly one prime $p \equiv 2 \bmod 3$. The many values of $b$
ending in $19$ are explained by the observation that $m$ is
divisible by $5^2$ if $m \equiv 19, 69 \bmod 100$.

Nothing so far prevents a pure cubic field $K_b$ from having an
odd class number if the rank of $E_b$ is even: the rational points
on $E_b$ give rise to ideals $\fa$ in $K_b$ whose squares are
principal, but there is no guarantee that $\fa$ is not principal.
Yet all available numerical evidence points towards the following

\medskip\noindent{\bf Conjecture 1.}
{\em Let $b \ge 1$ be an integer and assume that $m = 8b^3+3$ is
cubefree. If the class number of $K_b = \Q(\sqrt[3]{m}\,)$ is odd, 
then the rank of the elliptic curve $E_b: y^2 = x^3 - m$ is $1$.} 
\medskip

This conjecture implies, by the parity conjecture, the following, 
which does not even mention elliptic curves:

\medskip\noindent{\bf Conjecture 2.}
{\em Let $b \ge 1$ be an integer and assume that $m = 8b^3+3$ is cubefree. 
If the class number of $K_b = \Q(\sqrt[3]{m}\,)$ is odd,
then $m$ is divisible by an odd number of squares of primes 
$p \equiv 2 \bmod 3$.}
\medskip

A weaker formulation of the conjecture would be that the class
number of $K_b$ is even whenever $m$ is squarefree. Even this
weaker conjecture is unlike anything I would have expected. 
After all, the primes with exponent $1$ and $2$ in the prime 
factorization of $m$ change their roles when $m$ is replaced 
by $m^2$ (observe that $\Q(\sqrt[3]{m}\,) = \Q(\sqrt[3]{m^2}\,)$). 

\section{Nobody Expects the Spanish Inquisition}

In the conjectures above, the condition that $m$ be squarefree  
seemed quite surprising at first. This condition also occurs in 
the computation of an integral basis of pure cubic fields: it 
is well known that the ring of integers in $\Q(\sqrt[3]{m}\,)$
is given by $\Z[\sqrt[3]{m}\,]$ (such cubic fields are called 
monogenic) if and only if $m \not\equiv \pm 1 \bmod 9$ is 
squarefree. If, on the other hand, $m$ is divisible by the square 
of a prime $p$, then $\frac1p \sqrt[3]{m^2}$ is integral, and
the field is not monogenic.

But what should the form of an integral basis have to do with the
parity of the class number? Ten years ago, just about any number 
theorist you would have asked probably would have answered ``nothing!''
and would have quoted the heuristics of Cohen, Lenstra and Martinet
as supporting evidence. In fact, Cohen and Lenstra \cite{CL} gave an 
explanation of the numerical evidence for the distribution of class 
numbers of quadratic number fields based on certain heuristics; the main
idea was that it's not the actual size of a class group that matters 
but rather the size of its automorphism group. It is clear that
the prime $2$ behaves differently in quadratic extensions $k$ since 
the $2$-class group $\Cl_2(k)$ is far from being random: Gauss's genus
theory predicts its rank, and even the invariants divisible by $4$.

Cohen and Martinet \cite{CM1} then extended this project to class 
groups of extensions of higher degree.  As in the case of quadratic 
extensions there were ``bad primes'' $p$ for which the behaviour of 
the $p$-class group $\Cl_p(k)$ was not believed to be random; in 
\cite{CM1}, the prime $2$ was considered to be good for nonnormal
cubic extensions $k$ although it divides the degree of the normal 
closure of $k/\Q$. In particular, the probability that the class
number of $k$ is even was predicted to be about $p = 0.25932$.
In \cite{CM2}, however, the authors cast some doubt on their
earlier conjectures and asked whether the prime $2$ actually was
bad in this case. 

Finally Bhargava and  Shankar \cite{BS} proved the following result:
the average size of the $2$-class groups of complex cubic number 
fields ordered by discriminant (or height) is smaller than the 
corresponding average for monogenic cubic fields. Their result
agrees with the Cohen-Martinet prediction based on the assumption
that the prime $2$ is good for nonnormal cubic fields.

An observation suggesting a relation between monogenic rings and
the distribution of class groups can actually already be found in 
the article \cite{EFO}, where the authors computed the $2$-rank 
of pure cubic number fields $\Q(\sqrt[3]{m}\,)$ by studying the 
elliptic curves $E: y^2 = x^3 \mp m$ and remarked
\begin{quote}
The primes\footnote{More precisely: the fields $\Q(\sqrt[3]{p}\,)$.} 
$p \equiv \pm 1 \bmod 9$ have relatively small $2$-class numbers.
It seems that the reason for this is the fact that $A_k: y^2 = x^3 + k$
has a point of order $2$ in $\Q_3$ iff $k^2 \equiv 1 \bmod 9$ \ldots
\end{quote}

Let me add the remark that the fact that the $2$-class groups of 
the fields $K_b$ do not seem to be random does, of course, not 
imply that the prime $2$ is bad in the sense of Cohen-Martinet 
because the family of fields $K_b$ has density $0$.

\section{Why the class number of $\Q(\sqrt[3]{11}\,)$ is even}

Since we have started our tour with the question why the 
class number of $\Q(\sqrt[3]{11}\,)$ is even it is about 
time we provide an answer.

Consider the elliptic curve $E: y^2 = x^3 - m$ for some cubefree 
integer $m \equiv 3 \bmod 4$, and let $K = \Q(\omega)$ denote the 
pure cubic number field defined by $\omega = \sqrt[3]{m}$. If 
$P = (r/t^2, s/t^3)$ is a rational point with $t \equiv 0 \bmod 2$, 
then $s^2 = r^3 - mt^6$ shows that $\alpha = r - t^2 \omega \in K$ 
is congruent to $1 \bmod 4$; moreover, $(\alpha)$ is, as we will 
see below, an ideal square. Thus the field $K(\sqrt{\alpha}\,)$
is a quadratic unramified extension of $K$, and by class field
theory, $K$ has even class number.

Computing the generators of the Mordell-Weil group $E(\Q)$ of
the elliptic curve $E: y^2 = x^3 - 11$ we find the two points
$P = (3,4)$ and $Q = (15,58)$. The sum
$P + Q = (9/4, -5/8)$ has the desired form, hence 
$\alpha = 9 - 4\omega$ works. The minimal polynomial of $\sqrt{\alpha}$
is $f(x) = x^6 - 27x^4  + 243x^2 - 25$, and the discriminant of the 
number field generated by a root of $f$ is $3^6 11^4$; a ``smaller''
polynomial generating the same number field is 
$g(x) =  x^6 - 3x^5 + 9x^4 - 1$. Thus $K = \Q(\sqrt[3]{11}\,)$ 
has the unramified quadratic extension $K(\sqrt{9-4\omega}\,)$.  

$$ \begin{array}{c|c|ccc}
     \text{point}  & & \alpha & \fa_P & [\fa_P]\\   \hline
 \rsp   (3,4)      & P & 3 - \omega & \ftw_1^2 & 1 \\
 \rsp   (15,58)    & Q & 15 - \omega & \ftw_1 \cdot \mathfrak{29} & [\ftw] \\
 \rsp   (\frac94, -\frac58) & P+Q & 9 - 4 \omega & \mathfrak{5} & [\ftw] \\
 \rsp   (\frac{345}{64}, - \frac{6179}{512}) & 2P & 345 - 64 \omega & 
                 \mathfrak{37}_3 \cdot \mathfrak{167} & 1 \\
 \rsp   (\frac{51945}{13456}, \frac{10647157}{1560896}) & 2Q & 
          51945 - 13456 \omega & 
          \mathfrak{37}_1 \cdot \mathfrak{83} \cdot \mathfrak{3467} & 1 \\
 \rsp  (\frac{861139}{23409}, \frac{799027820}{3581577}) & 3P &  
        861139 - 23409\omega & 
        \ftw_1^2 \cdot \mathfrak{5} \cdot \mathfrak{23} \cdot \mathfrak{1737017}
        & 1  
    \end{array} $$
\smallskip

The prime $2$ splits into two prime ideals in $K$, namely
$\ftw_1$ with norm $2$ and $\ftw_2$ with norm $4$. The squares
of these ideals are principal: we have $\ftw_1^2 = (5+2\omega+\omega^2)$ 
and $\ftw_2 = (3+\omega-\omega^2)$.

The prime number $37$ splits into three prime ideals in $K$;
the prime ideals in the decomposition 
$(37) = \mathfrak{37}_1 \mathfrak{37}_2 \mathfrak{37}_3$ are determined
by the congruences
$\omega \equiv - 9 \bmod \mathfrak{37}_1$,
$\omega \equiv -12 \bmod \mathfrak{37}_2$, and
$\omega \equiv -16 \bmod \mathfrak{37}_3$.
The first two ideals are  nonprincipal, whereas $\mathfrak{37}_3$ is 
generated by $5 - 2 \omega$. This is compatible with our construction 
of the Hilbert class field of $K$: a computation of the quadratic
residue symbols shows that 
$$ \Big[\frac{9 - 4\omega}{\mathfrak{37}_1} \Big]_2  = 
  \Big(\frac{45}{37}\Big) = -1, \quad
  \Big[\frac{9 - 4\omega}{\mathfrak{37}_2} \Big]_2  = 
  \Big(\frac{57}{37}\Big) = -1, \quad
\Big[\frac{9 - 4\omega}{\mathfrak{37}_3} \Big]_2  = 
  \Big(\frac{73}{37}\Big) = + 1. $$
Thus only $\mathfrak{37}_3$ splits in the Hilbert class field.

Without going into details we remark that $\fa_{3Q}$ belongs to the
ideal class $[\ftw]$. This is compatible with the conjecture that
the map $P \to [\fa_P]$ is a homomorphism from $E(\Q)$ to $\Cl(K)[2]$.

% 
% 3Q --> 50491376191 - 22468511025\omega; exp = 1 + 1 + 0 + 0 + 1
%
\medskip

Similarly, for $b = 3$ and $m = 219$, the curve $E_3(\Q)$
is generated by $P = (55/9, 82/27)$ and $Q = (283/9, 4744/27)$,
and the sum $P+Q$  provides us with the quadratic unramified 
extension $K(\sqrt{\alpha}\,)$ for $\alpha = 115657 - 12996 \omega$. 
The minimal polynomial of $\sqrt{\alpha}$ is 
$f(x) = x^6 - 346971x^4 + 40129624947x^2 - 1066391672856409$,
a ``smaller'' polynomial whose root generates the same number field
is $g(x) = x^6 - 3x^5 + 21x^4 - 37x^3 + 126x^2 - 108x - 3$.

\section{Hilbert Class Fields via Elliptic Curves}\label{S8}

We will now show that this construction works whenever
$E$ has rank $\ge 2$:

\begin{thm}\label{T1}
Let $b$ be an odd integer such that $m = 8b^3+3$ is squarefree.
Then the class number of $\Q(\sqrt[3]{m}\,)$ is even whenever
$E: y^2 = x^3 - m$ has rank $\ge 2$. If the parity conjecture
holds, then the class number is even for all squarefree values 
of $m$. 
\end{thm}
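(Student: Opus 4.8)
The plan is to manufacture an everywhere-unramified quadratic extension of $K=\Q(\sqrt[3]{m}\,)$ from a suitable rational point, following the recipe above: a point $(r/t^2,s/t^3)\in E(\Q)$ with $t$ even produces $\alpha=r-t^2\omega\equiv 1\bmod 4$ whose ideal $(\alpha)$ is a square, so that $K(\sqrt{\alpha}\,)/K$ is unramified, and is nontrivial precisely when $\alpha$ is not a square in $K$. Thus the first assertion reduces to producing, from the hypothesis $\rk E\ge 2$, a point with even denominator whose associated $\alpha$ is a non-square. The second assertion then follows formally: for squarefree $m$ the root-number formula gives $w(E)=+1$, so the parity conjecture forces $\rk E$ to be even, and since $\rk E\ge 1$ by the existence of the point (\ref{EPb}), we obtain $\rk E\ge 2$ and may invoke the first part.

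To organize the search I introduce two homomorphisms on $E(\Q)$. The first is the $2$-descent map $\delta\colon E(\Q)\to K^\times/(K^\times)^2$ attached to the rational $2$-torsion point $(\omega,0)\in E(K)[2]$, given by $P\mapsto x(P)-\omega$; it is a homomorphism and factors through $E(\Q)/2E(\Q)$. The second is the map $\lambda\colon E(\Q)\to\Z/2\Z$ recording the parity of the denominator $t$; the behaviour of denominators under reduction at $2$ shows $\lambda$ is a homomorphism, and the points with $t$ even form $\ker\lambda$. The explicit point in (\ref{EPb}) has $t=b$ odd, so $\lambda(P_b)=1$ and $\ker\lambda$ has index $2$. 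Using the identity $\eps\alpha=\beta^2$ established above one computes $\delta(P_b)=[\eps]$, and since $\eps$ is not a square we have $\delta(P_b)\ne 1$; note, however, that $[\eps]$ lies in the unit part $\cO_K^\times/(\cO_K^\times)^2$ and that $P_b\notin\ker\lambda$, which is exactly why $\eps$ itself furnishes no unramified extension when $b$ is odd.

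The heart of the argument is to prove $\ker\lambda\not\subseteq\ker\delta$, for then any $R\in\ker\lambda$ with $\delta(R)\ne 1$ has $t$ even and $\alpha_R$ a non-square, and $K(\sqrt{\alpha_R}\,)/K$ is the desired unramified extension. A dimension count over $\F_2$ shows that $\ker\lambda\subseteq\ker\delta$ forces $\dim_{\F_2}\delta(E(\Q))\le 1$, i.e. the degenerate case $\delta(E(\Q))=\langle[\eps]\rangle$. Since $E(\Q)[2]=0$ we have $\dim_{\F_2}E(\Q)/2E(\Q)=\rk E\ge 2$, so this case is excluded once $\dim_{\F_2}\delta(E(\Q))\ge 2$, and in particular whenever $\delta$ is injective on $E(\Q)/2E(\Q)$.

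Establishing this last inequality is the main obstacle. Because the $2$-torsion point $(\omega,0)$ and its isogeny are defined only over the \emph{non-Galois} cubic field $K$, the single-torsion descent $\delta$ need not be injective a priori: its kernel on $E(\Q)/2E(\Q)$ is controlled by a Selmer group of the $K$-isogenous curve, and bounding it requires a local analysis at the bad primes $2$, $3$ and the $p\mid m$. It is here that the hypotheses $b$ odd, $m\equiv 3\bmod 4$ squarefree, and the total ramification of $3$ in $K$ are spent: they pin down the local images of $\delta$, guarantee that $\delta(E(\Q))$ lands in the subgroup $V\subseteq K^\times/(K^\times)^2$ of classes represented by elements whose ideal is a square (so that $(\alpha_R)$ really is a square and the extension is unramified away from $2$), and force $\ker\delta=2E(\Q)$. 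Granting this, $\dim_{\F_2}\delta(E(\Q))=\rk E\ge 2$, the degenerate case cannot occur, and the resulting unramified quadratic extension shows that the class number of $K$ is even.
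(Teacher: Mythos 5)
Your overall skeleton matches the paper's: reduce the first assertion to producing a point $P=(r/t^2,s/t^3)\in E(\Q)\setminus 2E(\Q)$ with $2\mid t$, so that $\alpha=r-t^2\omega$ is totally positive, $\equiv 1 \bmod 4$, and generates an ideal square, whence $K(\sqrt{\alpha}\,)/K$ is unramified and nontrivial; and deduce the second assertion from $w(E)=+1$ for squarefree $m$, the point (\ref{EPb}), and parity. But the heart of your argument is a claim you explicitly decline to prove: that the hypotheses ``force $\ker\delta = 2E(\Q)$.'' This is precisely the hard content of the theorem, and as stated it is both unproven and stronger than the method can deliver. Since $(\omega,0)$ is only a $K$-rational $2$-torsion point, $\delta$ is the descent map attached to a $2$-isogeny $\phi\colon E\to E'$ over $K$, and its kernel on $E(K)$ is $\psi(E'(K))$, which has no a priori reason to meet $E(\Q)$ in exactly $2E(\Q)$; your appeal to ``local analysis at the bad primes'' is a placeholder, not an argument. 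Worse, the only mechanism available here for converting $\delta(P)=1$ into $P\in 2E(\Q)$ — a Kummer-type unit lemma modulo $4$ — genuinely requires $t$ even, so injectivity of $\delta$ on all of $E(\Q)/2E(\Q)$, including odd-denominator classes, is out of reach by this route.

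The paper proves instead the weaker \emph{conditional} statement, which suffices: if $\delta(P)$ is trivial and $2\mid t$, then $P\in 2E(\Q)$. Its proof is the technical core your proposal is missing: (i) the ideals $(x-\rho\omega)$ and $(x-\rho^2\omega)$ are coprime in $L=K(\sqrt{-3}\,)$ — this is where the squarefreeness of $m$ and the total ramification of $3$ are actually spent (Lemma \ref{Lcop}, which is also what you need, and do not supply, to know that $(\alpha)$ is an ideal square at all) — giving $r-\rho\omega t^2=\eta\beta^2$ for a unit $\eta\in L$; (ii) the explicit determination $E_L=\la -\rho,\eps,\eps'\ra$ of the unit group of $L$ and Prop.~\ref{Punit} ($\eta\equiv\xi^2\bmod 4$ implies $\eta$ a square), which applies exactly because $2\mid t$ makes the left-hand side $\equiv 1\bmod 4$; (iii) Connell's criterion that $x_P-e_j$ a square in $\Q(e_j)$ for $j=1,2,3$ forces $P=2Q$. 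With this conditional lemma your $\lambda$/$\delta$ bookkeeping closes exactly as in the paper: if neither generator lies in $\ker\lambda$, their sum does (your homomorphism property of $\lambda$ is the paper's addition lemma), that sum is not in $2E(\Q)$ since the generators are independent, and then the conditional lemma — not full injectivity — shows its $\alpha$ is a nonsquare. So the repair is to replace the unproved $\ker\delta=2E(\Q)$ by the even-denominator statement and prove it along the lines of Lemma \ref{Lcop} and Prop.~\ref{Punit}; without that, the proposal assumes the theorem's essential difficulty.
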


This theorem will be proved by showing that if there is a point
$P = (r/t^2, s/t^3)$ on $E(\Q) \setminus 2E(\Q)$ with $2 \mid t$, 
then the extension $H = K(\sqrt{\alpha}\,)$ of $K$ is a quadratic 
unramified extension. 

For showing that $H/K$ is unramified we have to verify the 
following claims:
\begin{itemize}
\item $\alpha > 0$, which implies that the extension $H/K$
      is unramified at the infinite primes;
\item $\alpha \equiv 1 \bmod 4$, which implies that $H/K$ is
      unramified above $2$;
\item $(\alpha) = \fa^2$ is an ideal square, which implies that
      $H/K$ is unramified at all finite primes not dividing $2$.
\end{itemize}
The first claim is trivial since $N\alpha = s^2 > 0$, and the
second claim follows from the assumption $2 \mid t$. It remains
to show that $(\alpha)$ is an ideal square:

\begin{lem}\label{Lcop}
Let $P = (r/t^2, s/t^3)$ be a rational point on 
$E_b: y^2 = x^3 - m$ for a squarefree value of $m = 8b^3+3$.
Assume as above that $\gcd(r,t) = \gcd(s,t) = 1$. 
Then the ideal $(\alpha)$ for $\alpha = r - t^2\omega$ 
is the square of an ideal $\fa$ in $K_b$.
\end{lem}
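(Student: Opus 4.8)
\smallskip

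The plan is to reduce the statement to a purely local one: $(\alpha)$ is an ideal square if and only if $v_\fp(\alpha)$ is even for every prime ideal $\fp$ of $\cO_{K_b}$. The one global input is the norm identity
$$ N_{K_b/\Q}(\alpha) = \prod_{i=0}^{2}\bigl(r - t^2\zeta_3^{\,i}\omega\bigr) = r^3 - t^6 m = s^2 , $$
where the middle equality is $\prod_i(X - \zeta_3^i Y) = X^3 - Y^3$ with $X = r$, $Y = t^2\omega$, and the last is the defining equation of $E_b$. Consequently $v_p(N_{K_b/\Q}(\alpha))$ is even for every rational prime $p$.

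I would first clear away the primes that the norm relation $v_p(N\alpha) = \sum_{\fp \mid p} f(\fp/p)\,v_\fp(\alpha)$ settles by itself. If $p$ is inert in $K_b$ there is a single prime $\fp$ above it with $f = 3$, so $3\,v_\fp(\alpha)$ is even and hence so is $v_\fp(\alpha)$. The ramified prime $3$ is just as easy: since $m \equiv 2,3,4 \bmod 9$ the prime $3$ is totally ramified, $(3) = \fp_3^{\,3}$ with $f(\fp_3) = 1$, whence $v_{\fp_3}(\alpha) = v_3(N\alpha) = 2v_3(s)$ is even. What these cases share is that $p$ has a single prime above it of odd residue degree, so the parity of $v_p(N\alpha)$ transfers directly to $v_\fp(\alpha)$.

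The real work concerns a prime $p \neq 3$ whose factorisation in $K_b$ involves more than one prime --- either $(p) = \fp_1\fp_2\fp_3$ or $(p) = \fp_1\fp_2$ with $f(\fp_2) = 2$ --- for then the norm relation constrains only a combination of the $v_{\fp_i}(\alpha)$ and cannot separate them. Here I would pass to the Galois closure $L = K_b(\zeta_3)$ and study the three conjugates $\alpha_i = r - t^2\zeta_3^{\,i}\omega$, so that $\alpha_0 = \alpha$ and $\alpha_0\alpha_1\alpha_2 = s^2$. The key claim is that these conjugates are pairwise coprime away from $3$: a prime $\mathfrak P$ of $L$ dividing two of them divides a difference $\alpha_i - \alpha_j$, which up to a unit equals $t^2\omega(\zeta_3 - 1)$, so $\mathfrak P$ lies over a rational prime dividing $3$, $m$, or $t$. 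One then shows that $\alpha$ itself avoids such primes (other than those above $3$): no $\mathfrak P$ over a $p \mid t$ divides $\alpha = r - t^2\omega$, since $\mathfrak P \mid t$ and $\mathfrak P \mid \alpha$ would force $p \mid r$ against $\gcd(r,t) = 1$; and no $\mathfrak P$ over a $p \mid m$ with $p \neq 3$ divides $\alpha$ either. Granting this, each $\mathfrak P$ over $p \neq 3$ divides at most one $\alpha_i$, so $v_{\mathfrak P}(\alpha)$ is even; and as $L/K_b$ is ramified only above $3$, one has $e(\mathfrak P/\fp) = 1$ and therefore $v_\fp(\alpha) = v_{\mathfrak P}(\alpha)$ is even, as required.

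The crux --- and the only place the squarefreeness of $m$ is used --- is the exclusion of primes above $p \mid m$. Suppose $p \mid m$ and $p \mid r$ with $p \neq 3$; then $\gcd(r,t) = 1$ gives $p \nmid t$, and since $m$ is squarefree $v_p(m) = 1$, so $v_p(s^2) = v_p(r^3 - mt^6) = \min(3v_p(r),\,1) = 1$, contradicting that $v_p(s^2)$ is even. Thus $p \nmid r$, and since $\mathfrak P \mid \omega$ makes $\alpha \equiv r \bmod \mathfrak P$, we get $\mathfrak P \nmid \alpha$. I expect this to be the main obstacle, because it is exactly the valuation bookkeeping here that breaks down when $m$ is divisible by the square of a prime --- in line with the decisive role the squarefree hypothesis plays throughout the paper.

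\smallskip
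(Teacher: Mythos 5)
Your proof is correct, and at its core it runs on the same fuel as the paper's: the identity $N\alpha = s^2$, pairwise coprimality of the conjugates $r - t^2\zeta_3^i\omega$ in the Galois closure $L = K_b(\sqrt{-3}\,)$ with common divisors trapped in $t^2\omega(\zeta_3-1)$, the gcd hypotheses to exclude primes over $t$, and squarefreeness to exclude primes dividing both $m$ and $r$. The packaging, however, is genuinely different. The paper works with the global criterion ``$N\alpha$ a square and $(\alpha,\alpha')=(1)$ in $\Q(\sqrt{-3},\omega)$ implies $(\alpha)$ an ideal square,'' and must therefore also rule out the prime $\fthr$ above $3$ as a common divisor, which it does by a separate congruence computation ($3 \mid s$ would force $r,t$ prime to $3$ and $9 \nmid r^3 - mt^6$). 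Your local, valuation-by-valuation formulation sidesteps this entirely: since $3$ is totally ramified with residue degree $1$, the parity of $v_3(s^2)$ transfers directly to $v_{\fthr}(\alpha)$, and no mod $9$ argument is needed. More strikingly, your own framework makes your ``crux'' paragraph redundant: for cubefree $m$, every $p \mid m$ with $p \ne 3$ is likewise totally ramified with a single prime $\fp$ of residue degree $1$, so it falls under your paragraph-two reduction and $v_\fp(\alpha) = v_p(s^2)$ is even automatically --- for instance with $v_p(m)=2$ and $v_p(r)=1$ one gets $v_\fp(\alpha)=2$, even though the three conjugates are then \emph{not} coprime at $\fp$. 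The coprimality step is thus needed only at split and partially split primes, where $p \nmid 3m$ holds automatically and the only possible common divisors lie over $p \mid t$, killed by $\gcd(r,t)=1$. In other words, your route, organized slightly more efficiently, proves the lemma for all cubefree $m$, whereas the paper's criterion genuinely consumes the squarefree hypothesis; squarefreeness remains essential elsewhere (the root number formula and the parity discussion), but not for this lemma.
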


\begin{proof}
The ideal $(\alpha)$ is a square if $N\alpha$ is a square and
$(\alpha,\alpha') = (1)$ in $K_b' = \Q(\sqrt{-3},\omega)$. In our 
case, $N\alpha = s^2$, and any ideal divisor of $\alpha$ and 
$\alpha'$ divides the difference $\alpha-\alpha' = (1-\rho)t^2\omega$.
Since $\gcd(s,t) = 1$, this ideal must divide $(1-\rho)\omega$. 
Any prime ideal dividing $\omega$ and $s$ also divides $r$, so its
norm divides both $r$ and $s$, hence the square of its norm divides 
$mt^6$. Since $\gcd(r,t) = 1$, it must divide $m$, and this contradicts
the assumption that $m$ be squarefree.

Thus the only possibilities for $\fd = (\alpha,\alpha')$ are
$\fd = (1)$ and $\fd = \fthr$, where $\fthr$ is the prime ideal
above $3$ (recall that $m \equiv 2, 3, 4 \bmod 9$, hence 
$3\cO_K = \fthr^3$). The second case is only possible if $3 \mid s$,
but this leads quickly to a contradiction, since in this case
$r$ and $t$ are not divisible by $3$, and $r^2 - mt^6$ is not
divisibly by $9$ in this case. Thus $(\alpha) = \fa^2$ is an ideal square.
\end{proof}

For showing that $H/K$ is a quadratic extension we need to 
know when $\alpha$ is  square in $K$. To this end let us first 
characterize the points on $E$ that give rise to squares:

\begin{lem}
Let $m$ be a cubefree integer, $K = \Q(\omega)$ the corresponding
pure cubic number field with $\omega^3 = m$, and $E: y^2 = x^3-m$ 
an elliptic curve. Every rational affine point $P \in E(\Q)$ can 
be written in the form $P = (r/t^2,s/t^3)$ for integers $r, s, t$ 
with $\gcd(r,t) = \gcd(s,t) = 1$.

The map $\alpha: E(\Q) \lra K^\times/K^{\times\,2}$ defined by
$\alpha(P) = (r-t^2\omega) K^{\times\,2}$ is a group homomorphism
whose kernel contains $2E(\Q)$; more exactly $\alpha(2P)$ is 
represented by the square of $\beta = (r-t^2\omega)^2 - 3(t^2\omega)^2$. 

Finally if $P \in \ker \alpha$ and $t$ is even, then $P = 2Q$ for 
some $Q \in E(\Q)$.
\end{lem}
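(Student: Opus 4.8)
The plan is to dispose of the three assertions in turn, the last being where the real work lies.

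First I would establish the normal form for rational points. Writing an affine point as $(x,y)$ with $x = a/c$, $y = b/d$ in lowest terms and substituting into $y^2 = x^3 - m$ gives $b^2 c^3 = d^2(a^3 - m c^3)$. Since $\gcd(a,c)=1$ forces $\gcd(c,\,a^3-mc^3)=1$, one reads off $c^3 \mid d^2$; and $\gcd(b,d)=1$ gives $d^2 \mid c^3$. Hence $c^3 = d^2$ with both positive, so $c$ is a perfect square, say $c = t^2$, and then $d = t^3$. Putting $r=a$, $s=b$ yields $P = (r/t^2, s/t^3)$ with $\gcd(r,t)=\gcd(s,t)=1$, as claimed.

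For the homomorphism property I would use the three-collinear-points identity. If $P_1+P_2+P_3 = O$, the points lie on a line $y = \lambda x + \nu$, so $g(x) = x^3 - \lambda^2 x^2 - 2\lambda\nu x - (\nu^2+m)$ has roots $x_1,x_2,x_3$. Evaluating at $x=\omega$ and using $\omega^3 = m$ gives $g(\omega) = -(\lambda\omega+\nu)^2$, whence $\prod_i (x_i - \omega) = -g(\omega) = (\lambda\omega+\nu)^2 \in K^{\times 2}$. Because $\alpha(-P)=\alpha(P)$ (equal $x$-coordinates) and every element of $K^\times/K^{\times 2}$ is its own inverse, this yields $\alpha(P_1+P_2)=\alpha(P_1)\alpha(P_2)$; together with $\alpha(O)=1$ and the routine handling of vertical lines, $\alpha$ is a homomorphism, so $2E(\Q)\subseteq\ker\alpha$. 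For the explicit square, the duplication formula gives $x(2P) = (x^4+8mx)/(4y^2)$, and expanding both sides with $\omega^3=m$ shows $x(2P)-\omega = ((x-\omega)^2 - 3\omega^2)^2/(2y)^2$. Clearing denominators turns the numerator into $\beta = (r-t^2\omega)^2 - 3(t^2\omega)^2$, so $\alpha(2P)$ is the class of $\beta^2$.

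The crux is the final assertion, which I would prove by inverting the duplication. Suppose $\alpha(P)=1$, so $x_P - \omega$ is a square $\gamma^2$ in $K$; write $\gamma = \gamma_0 + \gamma_1\omega + \gamma_2\omega^2$ with $\gamma_i\in\Q$. Expanding $\gamma^2 = x_P-\omega$ and using $\omega^3=m$ gives $\gamma_0^2 + 2m\gamma_1\gamma_2 = x_P$, $\;2\gamma_0\gamma_1 + m\gamma_2^2 = -1$, $\;\gamma_1^2 + 2\gamma_0\gamma_2 = 0$. The second equation forces $\gamma_2\ne 0$ (otherwise the third gives $\gamma_1=0$ and the second gives $0=-1$). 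I would then set $Q = (\gamma_1/\gamma_2,\, -1/\gamma_2)$: the last two equations combine to $\gamma_1^3 - m\gamma_2^3 = \gamma_2$, which is exactly $y_Q^2 = x_Q^3 - m$, so $Q\in E(\Q)$, and a direct check gives $\gamma = ((x_Q-\omega)^2-3\omega^2)/(2y_Q)$. By the duplication identity of the previous paragraph, $x(2Q)-\omega = \gamma^2 = x_P-\omega$, so $2Q = \pm P$; replacing $Q$ by $-Q$ if needed gives $P=2Q$. I expect this reverse-duplication step, namely extracting a genuinely rational point $Q$ from the square root $\gamma$ and verifying $Q\in E(\Q)$ with $2Q=P$, to be the main obstacle. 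Note that the argument uses no assumption on $t$: the hypothesis $2\mid t$ is needed only downstream (to force $\alpha\equiv 1\bmod 4$) and may be dropped from this assertion, so in fact $\ker\alpha = 2E(\Q)$.
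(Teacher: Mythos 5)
Your proposal is correct, and on the decisive third assertion it takes a genuinely different route from the paper. For the first two parts you are essentially doing what the paper does by citation: the normal form $(r/t^2,s/t^3)$ and the collinearity argument ($\prod(x_i-\omega)=(\lambda\omega+\nu)^2$ for three collinear points) are exactly the classical proofs the paper outsources to \cite{ST}, and your duplication computation $x(2P)-\omega = \bigl((x-\omega)^2-3\omega^2\bigr)^2/(2y)^2$ is the same identity the paper verifies. The divergence is in proving $P\in\ker\alpha \Rightarrow P=2Q$. The paper factors $y^2=(x-\omega)(x-\rho\omega)(x-\rho^2\omega)$, passes to $L=K(\sqrt{-3}\,)$, uses the coprimality of the conjugate factors (via the proof of Lemma \ref{Lcop}, which needs $m=8b^3+3$ squarefree) to write $r-\rho\omega t^2=\eta\beta^2$ with $\eta$ a unit of $L$, then uses the hypothesis $2\mid t$ to get this element $\equiv 1\bmod 4$ so that the Kummer-type unit result (Prop.~\ref{Punit}, itself proved by an explicit determination of $E_L$ for this family) forces $\eta$ to be a square, and finally invokes Connell's criterion that $P=2Q$ iff $x_P-e_j$ is a square in $\Q(e_j)$ for all $j$. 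You instead invert the duplication map directly: from $x_P-\omega=\gamma^2$ with $\gamma=\gamma_0+\gamma_1\omega+\gamma_2\omega^2$, the coefficient equations force $\gamma_2\neq 0$, yield the rational point $Q=(\gamma_1/\gamma_2,\,-1/\gamma_2)\in E(\Q)$ via $\gamma_1^3-m\gamma_2^3=\gamma_2$, and the verification $\gamma=\bigl((x_Q-\omega)^2-3\omega^2\bigr)/(2y_Q)$ gives $x(2Q)=x_P$, hence $2Q=\pm P$; all of these computations check out. Your approach buys a great deal: it is elementary (no ideal theory, no unit groups, no descent over $L$), unconditional, valid for every cubefree $m$, and it proves the sharper statement $\ker\alpha = 2E(\Q)$ with no parity hypothesis on $t$ — your closing observation that $2\mid t$ is superfluous here is correct, and the paper itself concedes as much in the footnote pointing to \cite{L3} for ``a simpler proof that $P=2Q$ in this case''; your construction is presumably that proof. (Even within the paper's framework the detour through $L$ is avoidable: squareness of $x_P-\omega$ in $\Q(\omega)$ transfers to $x_P-\rho^j\omega$ in $\Q(\rho^j\omega)$ by the isomorphism $\omega\mapsto\rho^j\omega$ fixing $\Q$, so Connell's criterion applies at once.) What the paper's heavier route buys is thematic rather than logical — its mod-$4$ unit analysis runs parallel to the unramifiedness arguments of Section~\ref{S8} — but for this lemma it imports hypotheses the statement does not need, while yours does not.
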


\begin{proof}
For a proof that we may assume $\gcd(r,t) = \gcd(s,t) = 1$
see \cite{ST}.

Performing a $2$-descent on the elliptic curve $E: y^2 = x^3 - m$
means studying the Weil map $E(K) \lra  K^\times/K^{\times\,2}$ which
sends a $K$-rational point $P = (x,y)$ to the coset represented by 
$x - \omega$. The fact that the Weil map is a homomorphism is 
classical (see e.g. \cite{ST}); in particular, the restriction of 
the Weil map to $E(\Q)$ is also a homomorphism.

Since the target group is $K^\times$ modulo squares, the Weil map can 
be defined by $\alpha(P) = (r - t^2\omega)K^{\times\,2}$. 

Now assume that $\alpha(P) = (r - t^2\omega)K^{\times\,2}$ and set 
$\beta = \alpha(P)^2 - 3t^4\omega^2$. Then
$$ \beta^2 = (r^2 - 2rt^2\omega - 2t^4 \omega^2)^2 
           = r^4 + 8rt^6m  - 4t^2 \omega(r^3 - mt^6). $$
On the other hand, the group law on $E$ gives
$$ 2(x,y) = \Big( \frac{9x^4}{4y^2} - 2x, 
                  - \frac{27x^6}{8y^3} + \frac{9x^3}{2y} - y\Big) 
          = \Big( \frac{x^4 + 8mx}{(2y)^2} , 
                    \frac{x^6 - 20mx^3 - 8m^2}{(2y)^3} \Big). $$
Thus
$$ x_{2P} =  \frac{x^4 + 8mx}{(2y)^2} 
                  =  \frac{x^4 + 8mx}{4x^3-4m}
                  =  \frac{r^4/t^8 + 8mr/t^2}{4r^3/t^6  - 4m}
                  =  \frac{r^4 + 8rmt^6}{4t^2(r^3 - mt^6)}, $$
and this implies the claim.

Finally assume that $\alpha(P) \in K^{\times\,2}$ for $P = (x,y)$. Since 
$$ y^2 = x^3-m = (x-\omega)(x-\rho\omega)(x-\rho^2\omega), $$
we find that 
$$ x^2 + x\omega + \omega^2 
            = (x-\rho\omega)(x-\rho^2\omega) \in K^{\times\,2}. $$ 

The ideals $(x-\rho\omega)$ and $(x-\rho^2\omega)$ are coprime
in $L = K(\sqrt{-3}\,)$ by the proof of Lemma \ref{Lcop}, hence 
\begin{equation}\label{Ert}
 r - \rho\omega t^2 =  \eta \beta^2 
\end{equation}
for some unit $\eta$ in $L$. Now we need a variant of 
Kummer's Lemma for $L$:

\begin{prop}\label{Punit}
If $\eta$ is a unit in $L$ with $\eta \equiv \xi^2 \bmod 4$,
then $\eta$ is a square.
\end{prop}

Since the left hand side of (\ref{Ert}) is congruent to $1 \bmod 4$,
Prop. \ref{Punit} implies that $\eta$ is a square. But then 
$r - \rho\omega t^2$ (and therefore also $r - \rho^2\omega t^2$)
is a square. 
By \cite[Prop. 1.7.5]{Conn}, we have $P = (x_P, y_P) = 2Q$ for some 
point $Q \in E(\Q)$ on $E: y^2 = f(x) = (x-e_1)(x-e_2)(x-e_3)$ 
if and only if $x_P - e_j$ is a square in $\Q(e_j)$ for $j = 1, 2, 3$.
This implies our claims\footnote{A simpler proof that $P = 2Q$ in 
this case can be found in \cite{L3}.}.
\end{proof}

If $E$ has rank $\ge 2$, let $P$ and $Q$ denote two generators of
$E(\Q)$. If one of them has the desired form, we are done. If not,
then we claim that $P+Q$ works:

\begin{lem}
If $P$ and $Q$ are independent points with odd denominators,
then $P+Q = (r/t^2, s/t^3)$ with $\gcd(r,t) = \gcd(s,t) = 1$ and 
$2 \mid t$.
\end{lem}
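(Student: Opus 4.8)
The plan is to control denominators $2$-adically by reducing everything modulo $2$. Reducing the equation $y^2 = x^3 - m$ modulo $2$ (recall $m$ is odd) gives $y^2 = x^3 + 1$ over $\F_2$, whose only points are $O$, $(1,0)$ and $(0,1)$; a quick check of partial derivatives shows that $(0,1)$ is a cusp while $(1,0)$ is smooth, so $\widetilde{E}_{\mathrm{ns}}(\F_2) = \{O,(1,0)\}$ has order $2$. Since the discriminant of our model is $-432 m^2$ with $v_2(-432 m^2) = 4 < 12$, the model is minimal at $2$, so the standard reduction theory applies: $E(\Q_2)$ carries the filtration $E_1(\Q_2) \subset E_0(\Q_2)$ with $E_1$ the kernel of reduction and $E_0/E_1 \cong \widetilde{E}_{\mathrm{ns}}(\F_2) \cong \Z/2\Z$, and reduction is a homomorphism on $E_0(\Q_2)$.

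First I would translate the parity of $t$ into the reduction. A reduced point $P = (r/t^2, s/t^3)$ with $2 \mid t$ has $v_2(x) = -2v_2(t) < 0$, hence lies in $E_1(\Q_2)$ and reduces to $O$; so for affine points, even denominator is exactly membership in $E_1(\Q_2)$. For $2 \nmid t$ I would show the numerator $r$ is forced to be odd: if $r$ were even then $r^3 \equiv 0 \pmod 8$ and, using $m \equiv 3 \pmod 8$ and $t^6 \equiv 1 \pmod 8$, the relation $s^2 = r^3 - m t^6$ would give $s^2 \equiv -3 \equiv 5 \pmod 8$, which is impossible. Thus an odd-denominator point has $r$ odd and (from the same relation) $s$ even, so it reduces to the smooth point $(1,0)$ and never to the cusp. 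In particular every rational point lies in $E_0(\Q_2)$, and reduction sends it to the nonzero class of $\Z/2\Z$ exactly when its denominator is odd.

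With this dictionary the conclusion is immediate: reduction gives a homomorphism $E(\Q) \to \widetilde{E}_{\mathrm{ns}}(\F_2) \cong \Z/2\Z$ under which both $P$ and $Q$, having odd denominators, map to the nonzero element; hence $P+Q$ maps to $0$, i.e. $P+Q \in E_1(\Q_2)$ has even denominator. Because $P$ and $Q$ are independent we have $P + Q \neq O$, so $P+Q$ is a genuine affine point $(r/t^2, s/t^3)$ with $\gcd(r,t) = \gcd(s,t) = 1$ and $2 \mid t$, as claimed.

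The step I expect to be the crux is the congruence ruling out reduction to the cusp: it is exactly here that the hypothesis $m \equiv 3 \pmod 8$ (automatic for $m = 8b^3+3$) is used, and without it a rational point could reduce to the singular point and the clean homomorphism to $\Z/2\Z$ would break down. One could instead avoid reduction theory and argue directly from the addition formula $x_{P+Q} = \lambda^2 - x_P - x_Q$ with $\lambda = (y_Q - y_P)/(x_Q - x_P)$, but establishing $v_2(x_{P+Q}) < 0$ this way needs the same mod-$8$ input together with a more delicate bookkeeping of the $2$-adic valuations of $\lambda$, so the reduction-theoretic route seems cleaner.
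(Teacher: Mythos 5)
Your proof is correct, and it takes a genuinely different route from the paper's. The paper argues directly and elementarily from the addition formula: writing $\mu = \frac{y_2-y_1}{x_2-x_1}$, it factors $y_1^2 - y_2^2 = (x_1-x_2)(x_1^2+x_1x_2+x_2^2)$, observes that the second factor is odd while $y_1-y_2$ and $y_1+y_2$ are both even, and concludes $v_2(y_1-y_2) < v_2(x_1-x_2)$, so that $\mu$, and hence $x_{P+Q} = \mu^2 - x_1 - x_2$, has negative (even) $2$-adic valuation. Your argument repackages exactly this parity bookkeeping into the standard reduction filtration: minimality at $2$ (your check $v_2(\Delta)=4<12$ is the right one), the dictionary ``even denominator $\Leftrightarrow$ $E_1(\Q_2)$, odd denominator $\Leftrightarrow$ reduction to the smooth point $(1,0)$,'' and the resulting homomorphism $E(\Q) \to \widetilde{E}_{\mathrm{ns}}(\F_2) \cong \Z/2\Z$. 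What your route buys: it makes structurally transparent that the even-denominator rational points form a subgroup of index at most $2$ in $E(\Q)$, which is precisely what the paper implicitly exploits in the following theorem ($r \le s+1$), where the generators $P_{k+1}+P_r, \ldots, P_{r-1}+P_r$ are formed; and it makes explicit the mod-$8$ step (using $m \equiv 3 \bmod 8$ to rule out an even numerator, i.e.\ reduction to the cusp) that the paper glosses over with the bare assertion ``we know by assumption that $x_1 \equiv x_2 \equiv 1 \bmod 2$'' --- your observation that the direct addition-formula proof needs the same mod-$8$ input is accurate. What the paper's route buys is brevity and self-containedness: a half-page valuation computation with no appeal to reduction theory. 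Both proofs are complete; yours is arguably the better-documented one, since it also records why $P+Q \neq O$ (independence) before invoking the $(r/t^2,s/t^3)$ normal form.
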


\begin{proof}
This is a direct consequence of the addition formulas. In fact, we 
find $x_3 = \mu^2 -x_1-x_2$ for $\mu = \frac{y_2-y_1}{x_2-x_1}$. 
Now $(y_1-y_2)(y_1+y_2) = y_1^2-y_2^2 = x_1^3 - x_2^3 = 
 (x_1-x_2)(x_1^2 + x_1x_2 + x_2^2)$; we know by assumption that
$x_1 \equiv x_2 \equiv 1 \bmod 2$, hence the right hand side
is divisible by $2$. Since the second bracket on the right side
is odd, the whole power of $2$ is contained in $x_1-x_2$.
On the left hand side, the power of $2$ is split among the factors
$y_1-y_2$ and $y_1+y_2$, both of which are even. This implies that
the denominator of $\mu$ must be even. In particular, the denominator
of $x_3 = \mu^2 - x_1-x_2$ must also be even, which is what we wanted
to prove.
\end{proof}

It remains to give a 

\begin{proof}[Proof of Prop. \ref{Punit}]
We start by determining the unit group of $L$. We know that 
$E_K = \la -1, \eps \ra$; it is easy to verify that 
$E = \la -\rho, \eps, \eps' \ra$ has finite index in $E_L$
(for example by showing that the regulator of this group
is nonzero), where $\eps' = 1 + 4b^2\rho\omega - 2b\rho^2\omega^2$
is the conjugate of $\eps$ over $K$. We first show that $E = E_L$.

\begin{itemize}
\item The units $\pm \eps$ are not squares in $L$. In fact,
      if $\pm \eps$ is a square in $L$, then  
      $L = K(\sqrt{\pm \eps}\,)$. This implies that $L/K$
      is unramified outside $2 \infty$, which contradicts the
      fact that $L/K$ is ramified above $3$.
\item The units $\pm \rho^c \eps$ are not squares in $L$: this
      follows from above by noting that $\rho$ is a square in $L$.
\item The units $\pm \rho^c \eps'$ and  $\pm \rho^c \eps''$ are not 
      squares in $L$: this follows by applying a suitable automorphism
      of $\Gal(L/\Q)$.  
\item The units $\pm \rho^c \eps'\eps''$ are not squares in $L$: 
      this follows from the above by observing that $\eps'\eps'' = 1/\eps$.
\end{itemize}
This shows that $E$ has odd index in $E_L$. The fact that the index 
must then be $1$ follows by applying the norm $N_{L/K}$ to any relation 
of the form $(-\rho)^a \eps^b {\eps'}^c = \eta^p$.

Now assume that $\eta \equiv \xi^2 \bmod 4$ for some unit $\eta \in E_L$.
Since $\rho$ is a square, we may assume that 
$\eta = (-1)^a \eps^b  {\eps'}^c$ with $a, b, c \in \{0, 1\}$. We know
that $\eta \equiv 1 \bmod 2$; if $\eta \equiv \xi^2 \bmod 4$, we must
have $\eta \equiv 1 \bmod 4$. Checking the finitely many possibilities
we easily deduce that $a = b = c = 0$, and this implies the claim.
\end{proof}

This completes the proof of Theorem \ref{T1}. 
As a matter of fact, we can prove something stronger:

\begin{thm}
Let $b$ be an odd integer such that $m = 8b^3+3$ is squarefree.
Then the $2$-rank $s$ of the class group $\Cl_2(K)$ of
$K = \Q(\sqrt[3]{m}\,)$ and the rank $r$ of the Mordell-Weil
group $E(\Q)$ of $E: y^2 = x^3 - m$ satisfy the inequality
\begin{equation}\label{Einrs} r \le s + 1. \end{equation}
\end{thm}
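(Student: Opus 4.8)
The plan is to realize the rank inside the $2$-descent homomorphism $\alpha$ and then use the norm form to save exactly one dimension over the naive count. First note that since $m$ is squarefree and $m>1$, it cannot be a perfect cube, so $x^3=m$ has no rational root and $E(\Q)[2]=0$; as odd torsion dies in $E(\Q)/2E(\Q)$, we get $\dim_{\F_2}E(\Q)/2E(\Q)=r$. The descent homomorphism $\alpha\colon E(\Q)\lra K^\times/K^{\times2}$ studied above kills $2E(\Q)$, and the third part of its lemma (the implication ``$\alpha(P)$ a square $\Rightarrow P=2Q$'', powered by Prop.\ \ref{Punit}) shows its kernel is exactly $2E(\Q)$. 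Hence $\alpha$ induces an injection $\bar\alpha\colon E(\Q)/2E(\Q)\hookrightarrow K^\times/K^{\times2}$, so that $r=\dim_{\F_2}\operatorname{im}\bar\alpha$ and it suffices to bound the image.

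Next I would locate the image. By Lemma \ref{Lcop} the ideal $(\alpha(P))$ is a square for every $P$, so $\operatorname{im}\bar\alpha$ lies in the subgroup
$$ V=\{\,\gamma K^{\times2}: (\gamma)=\fa^2 \text{ for some ideal }\fa\,\}\subseteq K^\times/K^{\times2}. $$
Sending $\gamma K^{\times2}\too[\fa]$ gives a well-defined homomorphism $\pi\colon V\lra\Cl(K)[2]$, which is surjective (if $[\fb]^2=1$ write $\fb^2=(\gamma)$) and whose kernel is the image of $\cO_K^\times$; the latter injects into $K^\times/K^{\times2}$ because a unit that is a square in $K$ is the square of a unit. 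This yields the exact sequence
$$ 0\lra\cO_K^\times/\cO_K^{\times2}\lra V\stackrel{\pi}{\lra}\Cl(K)[2]\lra 0. $$
Since $K$ is a complex cubic field, $\cO_K^\times=\la-1,\eps\ra\cong\Z/2\times\Z$, so $\dim_{\F_2}\cO_K^\times/\cO_K^{\times2}=2$ and $\dim_{\F_2}V=s+2$; naively this only gives $r\le s+2$.

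The extra dimension is recovered from the norm. Every representative $r-t^2\omega$ of $\alpha(P)$ satisfies $N(r-t^2\omega)=r^3-mt^6=s^2$, a positive square, so the induced norm map $K^\times/K^{\times2}\lra\Q^\times/\Q^{\times2}$ is trivial on $\operatorname{im}\bar\alpha$. Consequently any class in $\operatorname{im}\bar\alpha\cap(\cO_K^\times/\cO_K^{\times2})$ is represented by a unit $u$ with $N(u)$ a positive square, forcing $N(u)=+1$. Because $K$ has a single real place and $N(\eps)=1$, the positive-norm units are exactly $\la\eps\ra$ (the free part, without the factor $-1$), whose image in $\cO_K^\times/\cO_K^{\times2}$ is $\{1,\eps\}$, of dimension $1$. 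Hence $\operatorname{im}\bar\alpha\cap(\cO_K^\times/\cO_K^{\times2})$ has dimension at most $1$.

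Projecting along $\pi$ then gives
$$ r=\dim_{\F_2}\operatorname{im}\bar\alpha=\dim_{\F_2}\pi(\operatorname{im}\bar\alpha)+\dim_{\F_2}\big(\operatorname{im}\bar\alpha\cap\cO_K^\times/\cO_K^{\times2}\big)\le s+1, $$
which is (\ref{Einrs}). The main obstacle is the injectivity of $\bar\alpha$: the lemma above is stated only for points with even denominator, and the congruence $r-\rho\omega t^2\equiv1\bmod4$ that feeds Prop.\ \ref{Punit} must be re-examined for odd $t$. This is exactly where the hypotheses $b$ odd (so $m\equiv3\bmod4$) and $m$ squarefree do the genuine work; I would pin that step down carefully, as the dimension bookkeeping and the norm argument are otherwise routine.
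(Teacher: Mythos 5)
Your dimension bookkeeping is sound: with $m$ squarefree (hence not a cube) $E(\Q)[2]=0$, the exact sequence $0\to\cO_K^\times/\cO_K^{\times\,2}\to V\to\Cl(K)[2]\to 0$ is correct, and the norm argument pinning the unit contribution of $\operatorname{im}\bar\alpha$ to $\la\eps\ra$ (since $N(-1)=-1$, $N(\eps)=1$) is fine. But the step you defer at the end is not a loose end to ``pin down'' --- it is a genuine gap, and it is exactly the point the paper's own proof is engineered to avoid. Your argument needs $\ker\alpha=2E(\Q)$ on \emph{all} of $E(\Q)$: if the kernel is strictly larger, then $\dim_{\F_2}\operatorname{im}\bar\alpha<r$ and your chain of inequalities bounds the image, not $r$. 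The only kernel statement available, however, is for points $P=(u/t^2,v/t^3)$ with $2\mid t$. Its proof factors $u-\rho\omega t^2=\eta\beta^2$ in $L$ and must show the unit $\eta$ is a square, which is done by Prop.~\ref{Punit} and hence requires $u-\rho\omega t^2\equiv\xi^2\bmod 4$; for even $t$ this holds because then $u\equiv u^3\equiv v^2\equiv 1\bmod 8$, but for odd $t$ one has $u-\rho\omega t^2\equiv u-\rho\omega\bmod 4$, no such congruence is available, and nothing rules out a nonsquare $\eta$ (say $\eps'$ up to squares). So a point of odd denominator may a priori lie in $\ker\alpha\setminus 2E(\Q)$, and injectivity of $\bar\alpha$ is unproven; the paper never claims it.

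Worse, the natural repair undershoots by one. The even-denominator points together with $O$ form a subgroup $H\supseteq 2E(\Q)$ of index at most $2$ (the duplication formula $x_{2P}=(x^4+8mx)/(2y)^2$ puts $2E(\Q)$ inside $H$, and the paper's last lemma --- odd plus odd has even denominator --- forces the index bound); the kernel lemma is valid on $H$, but that only gives $\dim_{\F_2}\bar\alpha(H)\ge r-1$, and your count then yields $r-1\le s+1$, i.e.\ $r\le s+2$. What rescues the theorem is that on $H$ you must \emph{not} spend the extra dimension on units: each class $\bar\alpha(P)$ with $P\in H$ has a representative that is totally positive, $\equiv 1\bmod 4$, and coprime to $2$ (its ideal is $\fa^2$ with $N\fa=|v|$ odd), so it cannot be $-1$, $\eps$, or $-\eps$ modulo squares (positivity kills the first two candidates with negative real embedding, and $\eps\not\equiv\xi^2\bmod 4$ kills $\eps$); hence $\bar\alpha(H)\cap(\cO_K^\times/\cO_K^{\times\,2})=1$, the projection to $\Cl(K)[2]$ is injective on $\bar\alpha(H)$, and $r-1\le s$. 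This is precisely the paper's proof translated into your language: it replaces the odd-denominator generators by $P_{k+1}+P_r,\dots,P_{r-1}+P_r$, obtaining $r-1$ independent points of $E(\Q)\setminus 2E(\Q)$ with even denominators whose classes generate $r-1$ independent unramified quadratic extensions of $K$. In other words, the ``$+1$'' in (\ref{Einrs}) is the sacrificed generator $P_r$, not the unit $\eps$; as written, your proof has a hole exactly at its load-bearing step.
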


\begin{proof}
Let $P_1, \ldots, P_k$ denote the generators of $E(\Q)$
with even denominators, and $P_{k+1}, \ldots, P_r$ those with
odd denominators. Then the points 
$P_1$, \ldots, $P_k$, $P_{k+1}+P_r, \ldots, P_{r-1}+P_r$ are
independent points in $E(\Q) \setminus 2E(\Q)$ with even
denominators; by what we have proved, the pure cubic field
$K$ has $r-1$ independent unramified quadratic extensions.
\end{proof}

In the only case where we have been unable to compute the rank
of the curve $E_b$, namely for $b = 13$, we find $s = 3$ and
therefore $r \le 4$. This does not improve on the bound coming
from the Selmer rank, but it suggests that the rank $r$ of $E(\Q)$
in the preceding theorem may perhaps be replaced by some Selmer 
rank.

An inequality similar to (\ref{Einrs}) for general elliptic curves
was given by Billing (see \cite[Sect. 3.7]{Conn}). For curves
$E: y^2 = x^3 - m$, Billings bound was
$$ r \le \begin{cases}
                 s + 1 & \text{ if } m \not \equiv \pm 1 \bmod 9, \\
                 s + 2 & \text{ if } m      \equiv \pm 1 \bmod 9.
          \end{cases} $$
The main difference between Billing's result applied to $E_b$ and ours 
is that Billing proved $r \le s + 1$, whereas we proved $s \ge r-1$
by more or less explicitly exhibiting generators of the quadratic
unramified extensions of $K$.

\section*{Additional Remarks and Open Problems}
The explanation that the fields $K_b$ tend to have even class numbers
could have been giving by simply citing the parity conjecture and
Billing's bound. We have shown more by using rational points on 
elliptic curves for constructing subfields of Hilbert class fields.
It remains to be studied how much of Billing's bound can be proved
in a similar way.

Instead of using the curves $E_m: y^2 = x^3 - m$ we could also investigate
the family $E_{-m}: y^2 = x^3 + m$; in this case, the root number is 
$w(E_{-m}) = - w(E_m)$, so we expect that its rank is odd whenever
$m$ is squarefree. For the curves with rank $1$, the generator
seems to have even denominator in most cases. It also seems that
the curves $E_{-m}$ more often have nontrivial Tate-Shafarevich groups
than the curves $E_m$, but this might be a general feature of
families of elliptic curves with root number $-1$ when
compared with families of curves with rank $\ge 1$ and positive
root number.

I would like to call the attention of the readers to the fact
that Soleng \cite{Sol} has constructed a homomorphism from the group 
of rational points on elliptic curves to the class groups of certain 
quadratic number fields. Our map sending the points $P = (r/t^2,s/t^3)$
with even $t$ to the ideal class $[\fb]$, where 
$(s + t^3\sqrt{-m}\,)  = \fb^2$, does not seem to be a special case
of Soleng's construction. Is this map also a homomorphism? How are
these maps related to the group structure on Pell surfaces 
$y^2 + mz^2 = x^3$ studied in \cite{HL}?

Our calculations in the Mordell-Weil group of $y^2 = x^3 -11$ and the
corresponding pure cubic field $K = \Q(\sqrt[3]{11}\,)$ paired with
a strong belief in the prestabilized harmony of algebraic number theory
and a considerable amount of wishful thinking suggest the following:
If $P = (x,y)$ with $x = \frac{r}{t^2}$ is a rational point on 
the elliptic curve $E:  y^2 = x^3 - m$, then under suitable conditions 
on $m$, the ideal $(r - t^2 \omega) = \fa_P^2$ is an ideal square in 
$K = \Q(\sqrt[3]{m}\,)$, and  the map sending $P$ to the ideal class 
of $\fa_P$ is a homomorphism from $E(\Q)$ to the $2$-class group
$\Cl(K)[2]$ of the pure cubic field $K$; this is certainly a problem 
that deserves to be studied.

Paul Monsky gave a proof that the class number of $\Q(\sqrt[4]{p}\,)$
is even for primes $p \equiv 9 \bmod 16$ based on the parity 
conjecture in \cite{Mon1}; an unconditional proof of this fact can 
be found in \cite{Mon2}. Is it possible to give an unconditional 
proof of the results on the parity of the class numbers of the 
pure cubic fields $K_b$?

\section*{Acknowledgement}

I thank Dror Speiser \cite{MO} for reminding me of the approach using 
elliptic curves and for pointing out the relevance of \cite{Liv}. 
Paul Monsky kindly sent me an unpublished manuscript \cite{Mon1} 
in which he studied connections between the parity of class numbers 
of pure quartic fields and elliptic curves. 
All calculations were done with {\tt pari} and {\tt sage}.

\vskip 0.5cm

\end{document}